\def\<{{\langle}}
\def\>{{\rangle}}
\def\note#1{{}}
\def\note#1{}
\def\rend#1#2{{{\rm End}\sb{#1}(#2)}}
\def\beq{\begin{equation}}
\def\eeq{\end{equation}}
\def\id{{I}}
\def\ot{{\otimes}}
\newcounter{zlist}
\def\Label#1{\label{#1}\ifmmode\llap{[#1] }\else 
\marginpar{\smash{\hbox{\tiny [#1]}}}\fi}
\def\Label{\label}
\newtheorem{proposition}{Proposition}[section]
\newtheorem{theorem}[proposition]{Theorem}
\theoremstyle{definition}
\newtheorem{definition}[proposition]{Definition}
\theoremstyle{remark}
\newtheorem{remark}[proposition]{Remark}
\newcounter{c}
\newcommand{\etyk}[1]{\vspace{-7.4mm}$$\begin{equation}\Label{#1}
\addtocounter{c}{1}}
\renewcommand{\]}{\ifnum \value{c}=1 $$\else \end{equation}\fi}
\begin{document}

\title{YANG--BAXTER OPERATORS FROM $( \mathbb{G},\theta )$-LIE ALGEBRAS}
\author{F. F. Nichita\,$^1$
and Bogdan Popovici\,$^2$\\
$^1$ Institute of Mathematics "Simion Stoilow" of the Romanian Academy \\ 
P.O. Box 1-764, RO-014700 Bucharest, Romania \\
E-mail: Florin.Nichita@imar.ro\\
$^2$ Horia Hulubei National Institute for Physics and Nuclear Engineering,\\
P.O.Box MG-6, Bucharest-Magurele, Romania\\
E-mail: popobog@theory.nipne.ro
}

\date{October 2010}
\maketitle
\begin{abstract}

The $(\mathbb{G},\theta)$-Lie algebras are structures which unify 
the Lie algebras and Lie superalgebras. We 
use them to produce solutions for the  
quantum Yang--Baxter equation. 
The constant and the  spectral-parameter 
Yang-Baxter equations and
Yang-Baxter systems are also studied.

\end{abstract}

\section{Introduction }

The $(\mathbb{G},\theta)$-Lie algebras are structures which unify 
the Lie algebras and Lie superalgebras, and, in this paper,
they will be used to produce solutions for the celebrated 
quantum Yang--Baxter equation (QYBE). 
The 
theory of integrable Hamiltonian
systems makes great use of the solutions of the one-parameter form of the
QYBE (which are related to the two-parameter form of the QYBE), 
since coefficients of the power series expansion of
such a solution give rise to commuting integrals of motion.
{\em Yang--Baxter systems} emerged from the study of quantum
integrable systems, as generalizations of the QYBE related to 
nonultralocal models.

This paper presents some of the latest results
on Yang-Baxter operators from algebra structures
and related topics, such as
enhanced
versions of Theorem 1 (from
\cite{tbh}). Also, we study
Yang-Baxter operators from  Lie superalgebras and from
$(\mathbb{G},\theta)$-Lie algebras. 
The following authors constructed 
Yang-Baxter
operators from Lie (co)algebras and Lie superalgebras before:
 \cite{mj}, \cite{ba}, \cite{nichita}, etc.
We extend some of the above results to Yang-Baxter
systems and spectral-parameter dependent Yang-Baxter equations.

\section{The constant QYBE}

Throughout this paper $ k $ is a field. All tensor products appearing in this paper are defined over $k$.
For $ V $ a $ k$-space, we denote by
$ \   \tau : V \otimes V \rightarrow V \ot V \  $ the twist map defined by $ \tau (v \ot w) = w \ot v $, and by $ I: V \rightarrow V $
the identity map of the space V.

We use the following notations concerning the Yang-Baxter equation.

If $ \  R: V \ot V \rightarrow V \ot V  $
is a $ k$-linear map, then
$ {R^{12}}= R \ot I , {R^{23}}= I \ot R ,
{R^{13}}=(I\ot\tau )(R\ot I)(I\ot \tau ) $.

\begin{definition}
An invertible  $ k$-linear map  $ R : V \ot V \rightarrow V \ot V $
is called a Yang-Baxter
operator if it satisfies the  equation
\begin{equation}  \label{ybeq}
R^{12}  \circ  R^{23}  \circ  R^{12} = R^{23}  \circ  R^{12}  \circ  R^{23}
\end{equation}
\end{definition}

\begin{remark}
The equation (\ref{ybeq}) is usually called the braid equation. It is a
well-known fact that the operator $R$ satisfies (\ref{ybeq}) if and only if
$R\circ \tau  $ satisfies
  the constant QYBE 
(if and only if
$ \tau \circ R $ satisfies
the constant QYBE):
\begin{equation}   \label{ybeq2}
R^{12}  \circ  R^{13}  \circ  R^{23} = R^{23}  \circ  R^{13}  \circ  R^{12}
\end{equation}

\end{remark}

\begin{remark}
(i) $ \   \tau : V \otimes V \rightarrow V \ot V \  $ is an example of a 
Yang-Baxter operator.

(ii) An exhaustive list of invertible solutions for (\ref{ybeq2}) in dimension 
2 is given in \cite{hi} and in the appendix of \cite{HlaSno:sol}.

(iii) Finding all Yang-Baxter operators in dimension greater than 2 is an 
unsolved problem.

\end{remark}

\bigskip

Let $A$ be a (unitary) associative $k$-algebra, and $ \alpha, \beta, \gamma \in k$. 
We define the
$k$-linear map:
$ \  R^{A}_{\alpha, \beta, \gamma}: A \ot A \rightarrow A \ot A, \ \ 
R^{A}_{\alpha, \beta, \gamma}( a \ot b) = \alpha ab \ot 1 + \beta 1 \ot ab -
\gamma a \ot b $.

\begin{theorem} (S. D\u asc\u alescu and F. F. Nichita, \cite{DasNic:yan})
\label{primat}
Let $A$ be an associative 
$k$-algebra with $ \dim A \ge 2$, and $ \alpha, \beta, \gamma \in k$. Then $ R^{A}_{\alpha, \beta, \gamma}$ is a Yang-Baxter operator if and only if one
of the following holds:

(i) $ \alpha = \gamma \ne 0, \ \ \beta \ne 0 $;

(ii) $ \beta = \gamma \ne 0, \ \ \alpha \ne 0 $;

(iii) $ \alpha = \beta = 0, \ \ \gamma \ne 0 $.

If so, we have $ ( R^{A}_{\alpha, \beta, \gamma})^{-1} = 
R^{A}_{\frac{1}{ \beta}, \frac{1}{\alpha}, \frac{1}{\gamma}} $ in cases (i) and
(ii), and $ ( R^{A}_{0, 0, \gamma})^{-1} = 
R^{A}_{0, 0, \frac{1}{\gamma}} $ in case (iii).
\end{theorem}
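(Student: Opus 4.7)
The plan is to compute both sides of the braid equation (\ref{ybeq}) directly on a generic element $a\ot b\ot c\in A\ot A\ot A$ and to extract polynomial conditions on $(\alpha,\beta,\gamma)$ by exploiting the hypothesis $\dim A\geq 2$. First I would expand $R^{12}\circ R^{23}\circ R^{12}(a\ot b\ot c)$ and $R^{23}\circ R^{12}\circ R^{23}(a\ot b\ot c)$ step by step. Since each application of $R^{12}$ or $R^{23}$ splits any simple tensor into three new ones, after three iterations each side becomes a sum of up to twenty-seven simple tensors whose slots contain elements of the form $1$, $a$, $b$, $c$, $ab$, $bc$, $abc$ and so on. I would then transfer everything to one side and group terms by their tensor pattern, i.e. by which product appears in each of the three slots.

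To convert this vector equation into scalar conditions, I would pick an element $e\in A$ linearly independent from $1$ (which exists because $\dim A\geq 2$) and evaluate the resulting difference at carefully chosen triples $(a,b,c)\in\{1,e\}^{3}$. Each such evaluation isolates a subset of the coefficients, and the linear independence of $1,e,e^{2}$ in the various tensor slots yields a system of low-degree polynomial equations in $\alpha,\beta,\gamma$. I expect this system to factor cleanly, with the three alternatives (i)--(iii) corresponding precisely to which of the factors $\alpha-\gamma$, $\beta-\gamma$, $\alpha$, $\beta$ is forced to vanish. For the converse direction, in each of the three cases I would substitute the values back into the expansion and verify that both sides agree term by term; this should be a short check, since many of the twenty-seven terms collapse.

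The inverse formulas are handled separately by composing $R^A_{\alpha,\beta,\gamma}$ with the stated candidate and simplifying, again using only the definition of $R$. The main obstacle is organisational rather than conceptual: one must group the many expanded terms by tensor pattern and select a sufficient yet non-redundant family of test triples to extract all independent relations without losing information. Classifying terms by how many of their three slots contain $1$ (zero, one, two, or three) should keep the bookkeeping manageable.
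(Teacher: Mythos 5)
The paper does not prove this theorem itself --- it is quoted from \cite{DasNic:yan} --- and your direct-expansion strategy is indeed the route taken there, so the overall plan is reasonable. However, there are two concrete gaps. First, your argument for the ``only if'' direction extracts conditions solely from the braid equation, and you expect the resulting polynomial system to force exactly the alternatives (i)--(iii). It does not: the braid equation (\ref{ybeq}) alone admits strictly more solutions. For instance, with $\alpha=\gamma\ne 0$ and $\beta=0$ one checks that both sides of (\ref{ybeq}) reduce to $-\,\alpha^{3}\,(ab\otimes c\otimes 1 - a\otimes bc\otimes 1 - ab\otimes 1\otimes c + a\otimes b\otimes c)$, so the braid equation holds, yet $R^{A}_{\alpha,0,\alpha}(b\otimes 1)=\alpha b\otimes 1-\alpha b\otimes 1=0$, so the map is not invertible; likewise $\alpha=\beta=\gamma=0$ gives $R=0$. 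Since the paper's Definition 2.1 builds invertibility into the notion of a Yang--Baxter operator, the ``only if'' direction must combine the braid identities with a separate non-invertibility argument to eliminate these extra branches. Your proposal never invokes invertibility in that direction, so as written it cannot arrive at the stated trichotomy.

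Second, your plan to evaluate at triples in $\{1,e\}^{3}$ and then use ``the linear independence of $1,e,e^{2}$'' is not available under the stated hypothesis: $\dim A\ge 2$ only guarantees that $1$ and $e$ are independent, and $e^{2}$ may well lie in $\mathrm{span}\{1,e\}$ (e.g.\ $A=k[x]/(x^{2})$ with $e=x$, where $e^{2}=0$). The coefficient extraction has to be organised so that it only uses the independence of $1$ and $e$ (and of the corresponding simple tensors such as $1\otimes e\otimes 1$, $e\otimes 1\otimes e$, etc.), for example by comparing the coefficients of tensor patterns that cannot be produced by products of the test elements. This is fixable, but the step as you describe it would fail for two-dimensional algebras, which are explicitly included in the theorem. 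The ``if'' direction and the verification of the inverse formulas are fine as sketched.
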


\begin{remark}

The Yang--Baxter equation plays an important role in knot theory. 
Turaev has described a general scheme to derive an invariant of 
oriented links from a Yang--Baxter operator, provided this 
one can be ``enhanced''.
In \cite{mn}, we considered the problem of applying Turaev's method to the
Yang--Baxter operators derived from algebra structures presented in
the above theorem. 
We concluded that 
Turaev's procedure invariably produces from any of those enhancements the 
Alexander polynomial of knots. 

\end{remark}

\bigskip

\begin{remark}
Let us observe that $ R'=  R^{A}_{\alpha, \beta, \alpha} \circ \tau $ is a solution for the equation
(\ref{ybeq2}). In dimension two, after
getting rid of the auxiliary parameters, we obtain
the simplest form of $ R' $:
\begin{equation} \label{rmatcon2}
\begin{pmatrix}
1 & 0 & 0 & 0\\
0 & 1 & 0 & 0\\
0 & 1-q  & q & 0\\
\eta & 0 & 0 & -q
\end{pmatrix}
\end{equation}
where $ \eta \in \{ 0, \ 1 \} $, and $q \in k - \{ 0 \}$.
The matrix form (\ref{rmatcon2}) was obtained as a consequence of the fact
that isomorphic algebras produce isomorphic Yang-Baxter operators, and it is
compatible with Remark 2.3 (ii).
\end{remark}

\section{The two-parameter form of the QYBE}

Formally, a colored Yang-Baxter operator is defined as a function $ R
:X\times X \to \rend k {V\otimes V}, $ where $X$ is a set and $V$ is a
finite dimensional vector space over a field $k$. 
Thus, for any $u,v\in X$,
$R(u,v) : V\otimes V\to V\otimes V$ is a linear operator. 
We consider three operators acting on a triple
tensor product $V\otimes V\otimes V$, $R^{12}(u,v) = R(u,v)\otimes
\id$, $R^{23}(v,w)= \id\otimes R(v,w)$, and similarly $R^{13}(u,w)$ as
an operator that acts non-trivially on the first and third factor in
$V\otimes V\otimes V$. 

$R$
satisfies the two-parameter form of the QYBE if:
\begin{equation}\label{yb} 
R^{12}(u,v)R^{13}(u,w)R^{23}(v,w) = R^{23}(v,w)
R^{13}(u,w)R^{12}(u,v)
\end{equation} 
$ \forall \ u,v,w\in X$.

\bigskip

\begin{theorem} (F. F. Nichita and D. Parashar, \cite{np})
Let $A$ be an associative 
$k$-algebra with $ \dim A \ge 2$, and
$ X \subset k $. Then,
for any two parameters $p,q\in k$, the function
$R:X\times X\to \rend k {A\otimes A}$ defined by
\begin{equation}\label{rsol} 
R(u,v)(a\otimes b) =p(u-v)1\otimes ab + q(u-v)ab\otimes 1 -(pu-qv)b\otimes a,
\end{equation}
satisfies the colored QYBE (\ref{yb}).
\end{theorem}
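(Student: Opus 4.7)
The plan is to verify the identity by direct expansion on a generic tensor $a \otimes b \otimes c \in A \otimes A \otimes A$, exploiting the decomposition of $R(u,v)$ into simple pieces. Specifically, I would write
$$R(u,v) = p(u-v)\, M + q(u-v)\, N + C(u,v)\, \tau,$$
where $M, N : A \otimes A \to A \otimes A$ are the constant maps $M(a \otimes b) = 1 \otimes ab$ and $N(a \otimes b) = ab \otimes 1$, and $C(u,v) = -(pu - qv)$. With this notation, each $R^{ij}(\cdot,\cdot)$ appearing in (\ref{yb}) splits into three summands, and the composition of three such maps splits into at most $27$ summands per side.

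The next step is to apply the three factors of the left-hand side of (\ref{yb}) in order to $a \otimes b \otimes c$, using associativity of $A$ at each stage to collapse iterated products (e.g.\ $(ab)c = abc$, $a(bc)=abc$, $1\cdot x = x$) to canonical form. The output is a $k[p,q,u,v,w]$-linear combination of elementary tensors such as $1 \otimes 1 \otimes abc$, $abc \otimes 1 \otimes 1$, $bc \otimes a \otimes 1$, $c \otimes b \otimes a$, $1 \otimes ac \otimes b$, and so forth. The same procedure is carried out for the right-hand side. Finally, one matches coefficients of like elementary tensors on the two sides: each coefficient equality becomes an identity of polynomials in $k[p,q,u,v,w]$ that must be verified directly.

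The main obstacle is purely combinatorial bookkeeping: managing the $27$ terms on each side and tracking, for each one, both its underlying elementary tensor and its scalar coefficient after associativity has been applied. Care is required where $M$ or $N$ introduces a unit $1$ into a tensor slot, since subsequent $R^{ij}$'s then act trivially in the multiplication but not in the swap, which changes the combinatorics of which triples of parameters appear in the same monomial. A clean presentation groups terms by output type (for instance, those lying in $A \otimes A \otimes 1$, those in $1 \otimes A \otimes A$, etc.) and reduces each bin separately.

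As sanity checks that the calculation is on the right track, I would verify: (a) setting $u = v$ makes $R(u,v)$ a scalar multiple of $\tau$, in which case the identity collapses to the well-known QYBE for $\tau$; (b) specialising $v = 0$, $u = 1$, $p = q$ recovers an instance of Theorem~\ref{primat} (with $\tau$ inserted appropriately), whose validity is already established; (c) the overall identity is linear in $p$ and in $q$ separately, so one could, if desired, verify the cases $(p,q) = (1,0)$ and $(p,q) = (0,1)$ and conclude by bilinearity, cutting the number of coefficient checks in half.
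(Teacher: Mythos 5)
The paper gives no proof of this theorem at all --- it is imported verbatim from \cite{np} --- so your argument has to stand on its own, and as written it is a plan rather than a proof. The decomposition $R(u,v)=p(u-v)\,M+q(u-v)\,N+C(u,v)\,\tau$ and the bin-by-bin bookkeeping by elementary tensor type are indeed the right way to organize the verification (and are essentially how such statements are checked in \cite{np}), but the entire mathematical content of the theorem is the list of polynomial identities in $k[p,q,u,v,w]$ that appear after matching coefficients, and you stop exactly where those identities would have to be displayed and seen to cancel. For a statement of this kind, ``expand and compare'' without the expansion is not yet a proof; at minimum the handful of nontrivial coefficient equalities (those coupling the swap coefficient $-(pu-qv)$ with the multiplication coefficients $p(u-v)$, $q(u-v)$) should be exhibited.

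The concrete error is reduction (c). Each factor $R^{12}(u,v)$, $R^{13}(u,w)$, $R^{23}(v,w)$ is homogeneous of degree one in the pair $(p,q)$ --- note that the single coefficient $-(pu-qv)$ already mixes $p$ and $q$ --- so each side of (\ref{yb}) is homogeneous of degree \emph{three} in $(p,q)$, not linear in $p$ and in $q$ separately. Vanishing of the difference at $(p,q)=(1,0)$ and $(0,1)$ controls only the $p^{3}$ and $q^{3}$ coefficients and says nothing about the mixed monomials $p^{2}q$ and $pq^{2}$, which is precisely where the nontrivial cancellations between the multiplication terms and the swap terms live. A legitimate shortcut of this type would require four specializations of $(p,q)$ (a Vandermonde-style argument in $p/q$), not two. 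Checks (a) and (b) are fine as consistency tests but carry no logical weight, so dropping (c) leaves you obliged to do the full expansion after all.
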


\begin{remark}
If $ \ pu \neq qv $ and $ \ qu \neq pv $ then the operator
($\ref{rsol}$) is invertible. Moreover, the following formula holds:
$ \ \ \ R^{-1}(u,v)(a\otimes b) = \frac{p(u-v)}{(qu-pv)(pu-qv)}ba\otimes 1 + 
\frac{q(u-v)}{(qu-pv)(pu-qv)}1\otimes ba - \frac{1}{(pu-qv)}b\otimes a $.

\end{remark}

\bigskip

Algebraic manipulations of the previous theorem lead to the following result.

\begin{theorem} \label{top}
Let $A$ be an associative 
$k$-algebra with $ \dim A \ge 2$ and
 $q\in k$. Then the operator
\begin{equation}\label{slsol} 
S( \lambda )(a\otimes b) = (e^{\lambda} - 1)1\otimes ab 
+ q(e^{\lambda} - 1)ab\otimes 1 -(e^{\lambda}-q)b\otimes a
\end{equation}
satisfies the one-parameter form of the Yang-Baxter equation:
$$S^{12} (\lambda_{1} - \lambda_{2}) S^{13} (\lambda_{1} - \lambda_{3}) S^{23}(\lambda_{2} - \lambda_{3})=$$
\begin{equation}\label{onepara}
= S^{23} (\lambda_{2} - \lambda_{3}) S^{13} (\lambda_{1} - \lambda_{2})
S^{12} (\lambda_{1} - \lambda_{2}).
\end{equation}
If $ \ e^{\lambda} \neq q  , \ \frac{1}{q} $, $ \ $ then the operator
($\ref{slsol}$) is invertible.
 
Moreover, the following formula holds:

$ \ \ \ S^{-1}(\lambda)(a\otimes b) = \frac{e^{\lambda}-1}{(qe^{\lambda}-1)(e^{\lambda}-q)}ba\otimes 1 + 
\frac{q(e^{\lambda}-1)}{(qe^{\lambda}-1)(e^{\lambda}-q)}1\otimes ba 
- \frac{1}{e^{\lambda}-q}b\otimes a $.
\end{theorem}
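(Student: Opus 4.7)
The strategy is to derive (\ref{onepara}) and the inverse formula as direct specializations of Theorem~3.1 and Remark~3.2, by passing from the two multiplicative parameters $(u,v)$ to a single additive parameter via $u=e^{\lambda}$. First I would fix $p = 1$ in the colored operator of Theorem~3.1, so that
\[
R(u,v)(a \otimes b) = (u - v)\, 1 \otimes ab + q(u - v)\, ab \otimes 1 - (u - qv)\, b \otimes a,
\]
and record the key identity
\[
R(e^{\lambda_i}, e^{\lambda_j}) = e^{\lambda_j}\, S(\lambda_i - \lambda_j),
\]
which is immediate by factoring $e^{\lambda_j}$ out of each of the three summands on the right-hand side of the definition of $R(e^{\lambda_i}, e^{\lambda_j})$.

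Next I would substitute $u = e^{\lambda_1}$, $v = e^{\lambda_2}$, $w = e^{\lambda_3}$ into the two-parameter QYBE (\ref{yb}) guaranteed by Theorem~3.1, and apply the identity above to each of the six factors. Both sides of the resulting equation acquire exactly the same scalar prefactor $e^{\lambda_2}\cdot e^{\lambda_3}\cdot e^{\lambda_3}$, which cancels and leaves precisely the one-parameter equation (\ref{onepara}).

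For the invertibility statement and the explicit form of $S^{-1}(\lambda)$, I would specialize the formula of Remark~3.2 at $p = 1$, $u = e^{\lambda}$, $v = 1$. The two nonvanishing conditions $pu \neq qv$ and $qu \neq pv$ become $e^{\lambda} \neq q$ and $e^{\lambda} \neq 1/q$, and the resulting expression matches the $S^{-1}(\lambda)$ displayed in the theorem term by term.

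I do not expect any substantive obstacle: the whole argument is bookkeeping once the key identity is in hand. The only point to watch is verifying that the three scalar factors $e^{\lambda_j}$ produced on each side of (\ref{yb}) multiply to the same quantity so that they cancel cleanly, which is transparent from the symmetric pattern of indices in the braid-like equation.
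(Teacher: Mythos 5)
Your proposal is correct and matches the paper's (only sketched) argument: the paper merely says ``algebraic manipulations of the previous theorem lead to the following result,'' and your substitution $p=1$, $u=e^{\lambda_i}$, $v=e^{\lambda_j}$ with the identity $R(e^{\lambda_i},e^{\lambda_j})=e^{\lambda_j}S(\lambda_i-\lambda_j)$ and the cancelling prefactor $e^{\lambda_2+2\lambda_3}$ is exactly the manipulation intended, as is the specialization of Remark~3.2 at $u=e^{\lambda}$, $v=1$ for the inverse. Note that your derivation in fact yields $S^{13}(\lambda_1-\lambda_3)$ and $S^{12}(\lambda_1-\lambda_2)$ on the right-hand side, confirming that the arguments $\lambda_1-\lambda_2$ appearing there in the paper's display are typographical slips.
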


\begin{remark}

The operator from Theorem \ref{top} can be obtained from
Theorem \ref{primat} and the {\bf Baxterization} procedure from \cite{defk} (page 22). 

Hint: Consider the operator
$ \  R^{A}_{q, \frac{1}{q}, \frac{1}{q} }: A \ot A \rightarrow A \ot A, \ \ 
 a \ot b \mapsto q ab \ot 1 +  \frac{1}{q} \ot ab -
 \frac{1}{q} a \ot b $ and its inverse, $ R^{A}_{q, \frac{1}{q}, q } $.

\end{remark}

\section{Yang-Baxter systems}
From the physical point of view the above relations are used to study
a certain class of quantum integrable systems, the ultralocal models
\cite{Fad,Kul:Skl}. However, interesting physical models which  have nonultralocal 
interactions appear, and they require the study of extensions of the 
QYBE \cite{HlaKun:qua, HlaSno:sol}. In the following we describe 
the Yang-Baxter systems in terms of the Yang-Baxter commutators.

Let $V$, $V'$, $V''$ be finite dimensional
vector spaces over the field $k$, and let $R: V\ot
V' \rightarrow V\ot V'$, $S: V\ot V'' \rightarrow V\ot V''$ and $T:
V'\ot V'' \rightarrow V'\ot V''$ be three linear maps.
The {\em Yang--Baxter
commutator} is a map $[R,S,T]: V\ot V'\ot V'' \rightarrow V\ot V'\ot
V''$ defined by \beq [R,S,T]:= R^{12} S^{13} T^{23} - T^{23} S^{13}
R^{12}. \eeq 
Note that $[R,R,R] = 0$ is just a short-hand
notation for writing the constant QYBE (\ref{ybeq2}).

A system of linear
maps
$W: V\ot V\ \rightarrow V\ot V,\quad Z: V'\ot V'\ \rightarrow V'\ot
V',\quad X: V\ot V'\ \rightarrow V\ot V',$ is called a
$WXZ$--system if the
following conditions hold: \beq \label{ybsdoub} [W,W,W] = 0 \qquad
[Z,Z,Z] = 0 \qquad [W,X,X] = 0 \qquad [X,X,Z] = 0\eeq 

\begin{remark}
It 
was observed that $WXZ$--systems with invertible $W,X$ and $Z$ can
be used to construct dually paired bialgebras of the FRT type
leading to quantum doubles. The above is one type of a constant
Yang--Baxter system that has recently been studied in \cite{np} and
also shown to be closely related to entwining structures \cite{bn}.
\end{remark}

\bigskip

\begin{theorem} (F. F. Nichita and D. Parashar, \cite{np})
Let $A$ be a $k$-algebra, and $ \lambda, \mu \in k$. The following is a 
$WXZ$--system:

$ W : A \ot A \rightarrow A \ot A, \ \ 
W(a \ot b)= ab \ot 1 + \lambda 1 \ot ab - b \ot a $,

$ Z : A \ot A \rightarrow A \ot A, \ \ 
Z(a \ot b)= \mu ab \ot 1 +  1 \ot ab - b \ot a $,

$ X : A \ot A \rightarrow A \ot A, \ \ 
X(a \ot b)= ab \ot 1 +  1 \ot ab - b \ot a $.

\end{theorem}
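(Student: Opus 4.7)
The plan is to verify the four defining $WXZ$-relations by direct computation, after a decomposition that drastically simplifies the bookkeeping. First I would introduce two auxiliary $k$-linear maps $M, N : A \otimes A \to A \otimes A$ given by $M(a \otimes b) = ab \otimes 1$ and $N(a \otimes b) = 1 \otimes ab$. Then $X = M + N - \tau$, and both other operators differ from $X$ by a single term:
\[
W = X + (\lambda - 1)\,N, \qquad Z = X + (\mu - 1)\,M.
\]

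Since the Yang--Baxter commutator $[\,\cdot\,,\,\cdot\,,\,\cdot\,]$ is trilinear, each relation expands into a finite sum of commutators with entries drawn from $\{X, M, N\}$. The two ``mixed'' conditions expand especially cleanly:
\[
[W, X, X] = [X,X,X] + (\lambda-1)\,[N, X, X],
\]
\[
[X, X, Z] = [X,X,X] + (\mu-1)\,[X, X, M].
\]
The anchor identity $[X,X,X] = 0$ comes for free: $X = \tau \circ R^{A}_{1,1,1}$, and $R^{A}_{1,1,1}$ is a Yang--Baxter operator by case (i) of Theorem \ref{primat}, so Remark 2.2 gives that $X$ satisfies the constant QYBE. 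Hence the two mixed relations reduce to verifying the ``atomic'' identities $[N, X, X] = 0$ and $[X, X, M] = 0$, which I would check by evaluating both sides on a generic $a \otimes b \otimes c$ and matching the resulting tensor monomials.

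For the braid conditions $[W,W,W] = 0$ and $[Z,Z,Z] = 0$ the expansion is longer. Writing $W = X + (\lambda-1)N$ and expanding trilinearly yields
\[
[W,W,W] = [X,X,X] + (\lambda-1)\,S_1 + (\lambda-1)^2\,S_2 + (\lambda-1)^3\,[N,N,N],
\]
with $S_1 = [N,X,X] + [X,N,X] + [X,X,N]$ and $S_2 = [N,N,X] + [N,X,N] + [X,N,N]$. Since the claim is made for every $\lambda \in k$ and each coefficient is an operator independent of $\lambda$, the four pieces must vanish separately. I expect that $[N, X, X]$, $[X, N, X]$, $[X, X, N]$, $[X, N, N]$ and $[N, N, N]$ each vanish on their own, while the two surviving summands of $S_2$ cancel pairwise via the symmetry $[N, N, X] = -[N, X, N]$. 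The same bookkeeping, with the roles of $M$ and $N$ swapped, yields $[Z,Z,Z] = 0$.

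The main obstacle is therefore not conceptual but computational: carrying out the handful of atomic identities above by evaluating both sides on $a \otimes b \otimes c$. Associativity and unitality of $A$ are the only algebraic inputs, and the cancellations become transparent if one groups the resulting terms by the positions of the full product $abc$ and of the pairwise products $ab$, $bc$, $ac$; the total number of monomials produced on each side is small enough that the verification is short once the decomposition into $M$, $N$, $\tau$ has been made.
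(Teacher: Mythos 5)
Your proposal is correct, and I have checked the atomic identities it relies on: with $M(a\otimes b)=ab\otimes 1$, $N(a\otimes b)=1\otimes ab$ and $X=M+N-\tau$, one indeed finds $[N,X,X]=[X,N,X]=[X,X,N]=[X,N,N]=[X,X,M]=[N,N,N]=0$, while $[N,N,X]=1\otimes bc\otimes a-1\otimes abc\otimes 1=-[N,X,N]$, so $S_1=S_2=0$ and all four $WXZ$ conditions follow. Note, however, that the paper itself offers no proof of this statement: it is quoted from \cite{np}, where the verification is done by brute-force evaluation of each Yang--Baxter commutator on $a\otimes b\otimes c$. Your route is genuinely different in organization and arguably cleaner: the decomposition $W=X+(\lambda-1)N$, $Z=X+(\mu-1)M$ plus trilinearity of $[\,\cdot\,,\cdot\,,\cdot\,]$ localizes all the work into a handful of small commutators, and the anchor $[X,X,X]=0$ is imported for free from Theorem \ref{primat} (whose ``if'' direction does not actually need $\dim A\ge 2$, so the $1$-dimensional case is not a problem) together with Remark 2.2. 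Two small caveats. First, the sentence ``the four pieces must vanish separately because the claim holds for every $\lambda$'' is circular as stated (you would be assuming the theorem) and is also false as a formal principle over a small finite field, where a polynomial in $\lambda$ can vanish identically on $k$ without its coefficients vanishing; fortunately you do not rely on it, since your actual plan is to verify each atomic commutator directly, and those verifications succeed. Second, the phrase ``I expect that'' should be replaced by the actual computations before this counts as a proof; but since every one of the expected identities is true, the gap is purely one of execution, not of ideas.
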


\begin{remark}
Let $R$ be a 
solution for the two-parameter form of the QYBE, i.e.
$ \ R^{12}(u,v)R^{13}(u,w)R^{23}(v,w) = R^{23}(v,w)
R^{13}(u,w)R^{12}(u,v)
 \ \  \forall \ u,v,w\in X$.

Then, if we fix $ s, t \in X$, we obtain the following
$WXZ$--system:

$W= R(s, s) $,
$ \ X= R(s, t) $ and
$\ Z= R(t, t) $.

\end{remark}

\section{Lie superalgebras}

Using some of the above techniques we now present enhanced
versions of Theorem 1 (from
\cite{tbh}).\\

\begin{theorem}  (F. F. Nichita and B. P. Popovici, \cite{nipo})
Let $V = W \oplus kc $ be a $k$-space, 
and $ f, g : V \ot V \rightarrow V $ $k$-linear maps such that
$ f, g = 0 $ on $ V \ot c + c \ot V $.
Then,
$ R: V \ot V \rightarrow V \ot V, \ 
R(v \ot w)= f(v \ot w) \ot c + c \ot g(v \ot w) $
is a solution for QYBE (\ref{ybeq2}).

\end{theorem}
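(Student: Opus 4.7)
The plan is to verify (\ref{ybeq2}) by direct computation on an arbitrary simple tensor $v_1 \ot v_2 \ot v_3 \in V \ot V \ot V$ and to show that both sides are $0$. The single observation that powers the argument is that the image of $R$ lies in $W := V \ot c + c \ot V$, while by hypothesis $f$ and $g$ (and hence $R$ itself) vanish on $W$; in particular $R(u \ot c) = R(c \ot u) = 0$ for every $u \in V$. I would record these vanishing identities first, since they are used repeatedly.

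The guiding principle for the rest is then: every application of $R^{ij}$ deposits a factor of $c$ in slot $i$ or slot $j$ of each resulting summand, so after two applications $R^{kl} R^{ij}$ in a chain, each term carries $c$ in two of the three tensor slots; a third application $R^{mn}$ in any QYBE pattern must then touch a $c$-slot and produce $0$. Concretely, I would unwind $R^{23} R^{13} R^{12}(v_1 \ot v_2 \ot v_3)$ step by step: $R^{12}$ gives two summands with $c$ in slot $1$ or in slot $2$; $R^{13}$ annihilates the one with $c$ in slot $1$ (because $R(c \ot v_3) = 0$) and sends the other to terms having $c$ in slots $\{1,2\}$ or in slots $\{2,3\}$; the final $R^{23}$ then evaluates $R$ on an element of $W$, giving $0$. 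A parallel bookkeeping shows $R^{12} R^{13} R^{23}(v_1 \ot v_2 \ot v_3) = 0$.

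Since both sides of (\ref{ybeq2}) vanish on every simple tensor, the equation holds trivially. The only obstacle I expect is clerical rather than conceptual: one has to apply $R^{13} = (I \ot \tau)(R \ot I)(I \ot \tau)$ carefully at each stage to make sure the $c$'s end up in the advertised slots. No cancellation of non-zero cross-terms is ever required, because the hypothesis forces $R$ to behave like a two-step nilpotent along any braid composition appearing in the QYBE; this is, in essence, a degenerate or ``trivially balanced'' solution rather than a combinatorially tight one.
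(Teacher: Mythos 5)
The paper itself states this theorem without proof (it is quoted from \cite{nipo}), so there is no in-paper argument to compare against; judged on its own terms, your proposal is correct. The key identities you isolate are exactly what is needed: $R$ kills $V\ot c + c\ot V$ because $f$ and $g$ do, and every summand in the image of $R$ carries a tensor factor $c$; hence after two of the three operators $R^{12},R^{13},R^{23}$ act, each surviving term has $c$ in two of the three slots, and since any two $2$-element subsets of $\{1,2,3\}$ intersect, the third operator necessarily evaluates $R$ on an element of $V\ot c+c\ot V$ and returns $0$. I checked the explicit unwinding you describe: $R^{23}R^{13}R^{12}(v_1\ot v_2\ot v_3)$ passes through $f(f(v_1\ot v_2)\ot v_3)\ot c\ot c + c\ot c\ot g(f(v_1\ot v_2)\ot v_3)$ and then dies, and symmetrically for $R^{12}R^{13}R^{23}$, so both sides of the constant QYBE vanish identically. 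Two small presentational points: you reuse the letter $W$ for the subspace $V\ot c+c\ot V$ of $V\ot V$, but the theorem statement already uses $W$ for a complement of $kc$ inside $V$, so choose a different symbol; and the step ``$R^{13}$ annihilates the term with $c$ in slot $1$ and sends the other to terms with $c$ in slots $\{1,2\}$ or $\{2,3\}$'' is the one place where the formula $R^{13}=(I\ot\tau)(R\ot I)(I\ot\tau)$ actually has to be applied, so in a written-up version that computation should appear explicitly rather than being asserted. With those cosmetic fixes the argument is complete.
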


\bigskip
\begin{definition}
A Lie superalgebra is a (nonassociative) $\mathbb{Z}_2$-graded algebra, or superalgebra, 
over a field $k$ with the  Lie superbracket, satisfying the two conditions:
$$[x,y] = -(-1)^{|x||y|}[y,x]$$
$$ (-1)^{|z||x|}[x,[y,z]]+(-1)^{|x||y|}[y,[z,x]]+(-1)^{|y||z|}[z,[x,y]]=0 $$
where $x$, $y$ and $z$ are pure in the $\mathbb{Z}_2$-grading. Here, $|x|$ denotes the degree of $x$ (either 0 or 1). 
The degree of $[x,y]$ is the sum of degree of $x$ and $y$ modulo $2$.
\end{definition}

Let $ ( L , [,] )$ be a Lie superalgebra over $k$,
and  $ Z(L) = \{ z \in L : [z,x]=0 \ \ \forall \ x \in L \} $.

For $ z \in Z(L), \ \vert z \vert =0 $ and $ \alpha \in k $ we define:

$$ { \phi }^L_{ \alpha} \ : \ L \ot L \ \ \longrightarrow \ \  L \ot L $$

$$ 
x \ot y \mapsto \alpha [x,y] \ot z + (-1)^{ \vert x \vert \vert y \vert } y \ot x \ . $$

Its inverse is:

$$ {{ \phi }^L_{ \alpha}}^{-1} \ : \ L \ot L \ \ 
\longrightarrow \ \  L \ot L $$

$$x \otimes y \mapsto \alpha z \otimes [x, y] + (-1)^{ \vert x \vert \vert y \vert 
} y \otimes x$$

\begin{theorem} Let  $ ( L , [,] )$ be a Lie superalgebra 
and
$ z \in Z(L), \vert z \vert = 0  $, and $ \alpha \in k $. Then:
$ \ \ \ \  { \phi }^L_{ \alpha} $ is a YB operator.
\end{theorem}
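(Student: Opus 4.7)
The plan is to verify the braid equation $\phi^{12}\phi^{23}\phi^{12}=\phi^{23}\phi^{12}\phi^{23}$ by direct computation on an arbitrary tensor $x\otimes y\otimes w$ of $\mathbb{Z}_2$-homogeneous elements of $L$; linearity and the decomposition $L=L_0\oplus L_1$ then extend this to all of $L\otimes L\otimes L$. Invertibility of $\phi^L_\alpha$ is immediate from the already displayed formula for $({\phi}^L_\alpha)^{-1}$: composing the two expressions, the graded swaps cancel, while every bracket-type cross term involves either $[z,\cdot]=0$ or a trivial sign $(-1)^{|z||\cdot|}=1$, so the composite reduces to the identity.

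Each application of $\phi$ to two adjacent slots produces two summands, a \emph{bracket} summand of weight $\alpha$ which plants a $z$ in one of the two slots, and a \emph{graded twist} summand carrying a sign $(-1)^{|a||b|}$. Three compositions in principle give $2^3=8$ monomials per side, but the hypotheses $z\in Z(L)$ and $|z|=0$ force the vanishing of every monomial in which a bracket would be evaluated against a $z$ previously produced, and trivialize every sign involving $|z|$. After this pruning one is left with six monomials on the left and five on the right.

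Four monomials on the right then match four on the left term-for-term, requiring only $|[a,b]|=|a|+|b|$ and $2|a||b|\equiv 0\pmod 2$ to reconcile the accumulated signs. The remaining two monomials on the left, both carrying the factor $\alpha^2(\cdot)\otimes z\otimes z$, sum to $\bigl([[x,y],w]+(-1)^{|x||y|}[y,[x,w]]\bigr)\otimes z\otimes z$, while the fifth monomial on the right is $\alpha^2[x,[y,w]]\otimes z\otimes z$. This is the only place the Lie-superalgebra axioms genuinely enter: the required identity $[x,[y,w]]=[[x,y],w]+(-1)^{|x||y|}[y,[x,w]]$ is obtained from the graded Jacobi identity
\[
(-1)^{|x||w|}[x,[y,w]]+(-1)^{|x||y|}[y,[w,x]]+(-1)^{|y||w|}[w,[x,y]]=0
\]
by rewriting $[w,x]=-(-1)^{|x||w|}[x,w]$ and $[w,[x,y]]=-(-1)^{(|x|+|y|)|w|}[[x,y],w]$ via graded antisymmetry, and then multiplying through by $(-1)^{|x||w|}$. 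The principal obstacle is the sign bookkeeping across the eight initial monomials; once tabulated, the matching is mechanical apart from this short graded-Jacobi rearrangement.
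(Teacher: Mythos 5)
Your proof follows essentially the same route as the paper's: expand both sides of the braid equation on homogeneous elements, prune the monomials killed by $z\in Z(L)$ and $|z|=0$ (leaving six terms on one side and five on the other), match four pairs term-for-term, and reduce the surviving $\otimes z\otimes z$ terms to the graded Jacobi identity, whose rearrangement into the form $[x,[y,w]]=[[x,y],w]+(-1)^{|x||y|}[y,[x,w]]$ you carry out correctly (the paper merely asserts this step). The one small inaccuracy is in your invertibility sketch: the two nonzero cross terms $\alpha(-1)^{|z||[x,y]|}[x,y]\otimes z$ and $\alpha(-1)^{|x||y|}[y,x]\otimes z$ do not each vanish via $[z,\cdot]=0$ or a trivial sign, but rather cancel against one another by graded antisymmetry, as in the paper's displayed computation.
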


\begin{proof}
The verification of the Yang-Baxter equation follows below: 
$$ { \phi }^{L_{23}}_{ \alpha} { \phi }^{L_{12}}_{ \alpha} { \phi }^{L_{23}}_{ \alpha} (a \ot b \ot c) = 
{ \phi }^{L_{12}}_{ \alpha} { \phi }^{L_{23}}_{ \alpha} { \phi }^{L_{12}}_{ \alpha}(a \ot b \ot c),  $$
\begin{eqnarray}
{ \phi }^{L_{23}}_{ \alpha} { \phi }^{L_{12}}_{ \alpha} { \phi }^{L_{23}}_{ \alpha} (a \ot b \ot c) =  
{ \phi }^{L_{23}}_{ \alpha} { \phi }^{L_{12}}_{ \alpha} (\alpha a \ot [b,c] \ot z + (-1)^{|b||c|} a \ot c \ot b) = \nonumber \\
{ \phi }^{L_{23}}_{ \alpha} (\alpha^2 [a,[b,c]]\ot z \ot z + (-1)^{|a||[b,c]|}\alpha [b,c] \ot a \ot z +  (-1)^{|b||c|}\alpha  [a,c] \ot z \ot b + \nonumber \\  
(-1)^{|b||c|} (-1)^{|a||c|} c \ot a \ot b  ) =  \alpha^2 [a,[b,c]]\ot z \ot z +  (-1)^{|a||[b,c]|} \alpha [b,c]\ot z \ot a + \nonumber \\  
(-1)^{|b||c|} \alpha [a,c]\ot b \ot z +  (-1)^{|b||c|} (-1)^{|a||c|}\alpha c\ot\ [a,b] \ot z + \nonumber \\   
(-1)^{|b||c|} (-1)^{|a||c|} (-1)^{|a||b|}  c \ot b \ot a \nonumber \\  
\end{eqnarray}

\begin{eqnarray}
{ \phi }^{L_{12}}_{ \alpha} { \phi }^{L_{23}}_{ \alpha} { \phi }^{L_{12}}_{ \alpha} (a \ot b \ot c) =  
{ \phi }^{L_{12}}_{ \alpha} { \phi }^{L_{23}}_{ \alpha} (\alpha [a,b] \ot z \ot c + (-1)^{|a||b|} b \ot a \ot c) = \nonumber \\
{ \phi }^{L_{12}}_{ \alpha} (\alpha [a,b]\ot c \ot z +  (-1)^{|a||b|}\alpha b \ot [a,c] \ot z +  (-1)^{|a||b|} (-1)^{|a||c|} b \ot c \ot a ) = \nonumber \\  
 \alpha^2 [[a,b],c]\ot z \ot z +  (-1)^{|[a,b]||c|} \alpha c\ot [a,b] \ot z + (-1)^{|a||b|} \alpha^2 [b,[a,c]]\ot z \ot z + \nonumber \\  
  (-1)^{|a||b|} (-1)^{|[a,c]||b|}\alpha [a,c]\ot\ b \ot z +  (-1)^{|a||b|} (-1)^{|a||c|} \alpha [b,c] \ot z \ot a  \nonumber \\  
+  (-1)^{|a||b|} (-1)^{|a||c|} (-1)^{|b||c|} c \ot b \ot a \nonumber \\  
\end{eqnarray}

i.e. 
\begin{eqnarray}
\alpha^2 [a,[b,c]]\ot z \ot z +  (-1)^{|a||[b,c]|} \alpha [b,c]\ot z \ot a +  (-1)^{|b||c|} \alpha [a,c]\ot b \ot z +  \nonumber  \\
(-1)^{|b||c|+|a||c|} \alpha c\ot\ [a,b] \ot z  =   \alpha^2 [[a,b],c]\ot z \ot z +  (-1)^{|[a,b]||c|} \alpha c\ot [a,b] \ot z + \nonumber \\ 
(-1)^{|a||b|} \alpha^2 [b,[a,c]]\ot z \ot z +   (-1)^{|a||b|} (-1)^{|[a,c]||b|}\alpha [a,c]\ot\ b \ot z + \nonumber \\
 (-1)^{|a||b|} (-1)^{|a||c|} [b,c] \ot z \ot a \nonumber \\
\end{eqnarray}

and the two terms are equal given the choice of $z$ and the Jacobi relations for the superalgebra:
\begin{eqnarray}
\alpha^2 [a,[b,c]]\ot z \ot z   =   \alpha^2 [[a,b],c]\ot z \ot z + (-1)^{|a||b|} \alpha^2 [b,[a,c]]\ot z \ot z \nonumber \\
\end{eqnarray}

It is easily checked that the two operators are inverse to each other.
\begin{eqnarray}
\phi_\alpha^L {\phi_\alpha^L}^{-1} (x \ot y) = \phi_\alpha^L (\alpha z \otimes [x, y] + (-1)^{ \vert x \vert \vert y \vert 
} y \otimes x) = \alpha^2 [z,[x,y]]\ot z + \nonumber \\ 
(-1)^{|[x,y]||z|} \alpha [x,y]\ot z +  (-1)^{|x||y|} \alpha [y,x]\ot z + (-1)^{|x||y|} (-1)^{|x||y|} x\ot y = x\ot y  \nonumber \\
\end{eqnarray}
\end{proof}

\begin{theorem}
 Let  $ ( L , [,] )$ be a Lie superalgebra, 
$ z \in Z(L), \vert z \vert = 0  $, 
$ X \subset k $,
and $ \alpha, \beta:X \times X \rightarrow k $. 
Then,
$R:X\times X\to \rend k {L \otimes L}$ defined by
\begin{equation}\label{Lsol} 
R(u,v)(a\otimes b) = \alpha(u,v)[a,b]\otimes z + \beta (u,v) (-1)^{|a||b|} a\otimes b,
\end{equation}
satisfies the colored QYBE (\ref{yb}) $ \iff 
 {\beta(u,w)} {\alpha(v,w)} =  {\alpha(u,w)} {\beta(v,w)} .$

\end{theorem}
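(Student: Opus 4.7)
The plan is to follow the strategy of the proof of Theorem~5.3, but with the spectral parameters $u,v,w$ tracked explicitly through every step. I first apply both sides of the colored QYBE~(\ref{yb}) to a pure homogeneous tensor $a\ot b\ot c\in L^{\ot 3}$. Each factor $R(u,v)$ splits into an $\alpha$-part (producing a bracket paired with a factor of $z$) and a $\beta$-part (the signed identity on $L\ot L$), so expanding either side yields $2^3 = 8$ summands, indexed by the $\alpha/\beta$ choice made at each position. Because $z$ is central and $|z|=0$, every summand in which an $R$ would have to compute a bracket involving $z$ vanishes, so many of the 16 candidate terms collapse.

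Next I would sort the surviving terms by tensor-position signature. Only six signatures appear: $a\ot b\ot c$, $[a,b]\ot z\ot c$, $a\ot [b,c]\ot z$, $[a,c]\ot b\ot z$, together with two $\,\cdot\ot z\ot z$ contributions on the LHS versus a single one on the RHS. For each of the first four signatures the LHS and RHS coefficients turn out to be the same monomial in $\alpha(u,v),\alpha(u,w),\alpha(v,w),\beta(u,v),\beta(u,w),\beta(v,w)$ (using commutativity of $k$), and the Koszul signs match after elementary rewrites such as $(|a|+|b|)|c| = |a||c|+|b||c|$ and $|[a,c]||b| = |a||b|+|b||c|$. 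So no algebraic identity from $L$ is needed in any of these matches.

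The only nontrivial comparison is for $\,\cdot\ot z\ot z$. The LHS contributes the two pieces $\alpha(u,v)\alpha(v,w)\beta(u,w)\,[a,[b,c]]$ and $\alpha(u,v)\alpha(u,w)\beta(v,w)(-1)^{|b||c|}[[a,c],b]$, whereas the RHS contributes only $\alpha(u,v)\alpha(u,w)\beta(v,w)\,[[a,b],c]$. Applying the super-Jacobi identity in the form $[[a,b],c] = [a,[b,c]] + (-1)^{|b||c|}[[a,c],b]$ splits the RHS into two pieces: the $[[a,c],b]$ parts cancel identically, and the $[a,[b,c]]$ parts match exactly when $\alpha(u,v)\alpha(v,w)\beta(u,w) = \alpha(u,v)\alpha(u,w)\beta(v,w)$, which, in a Lie superalgebra for which one can choose $a,b,c$ with $[a,[b,c]]\ne 0$ and $\alpha(u,v)$ nonzero, is exactly the stated condition $\beta(u,w)\alpha(v,w)=\alpha(u,w)\beta(v,w)$. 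The converse is immediate, since under this condition every term-by-term match goes through and the colored QYBE follows. The main obstacle I anticipate is purely bookkeeping: keeping track of the Koszul signs across each application of $R$ (including through the twists hidden in $R^{13}$), and noticing that of the two $z\ot z$ pieces on the LHS only the one feeding $[a,[b,c]]$ is responsible for the condition, while the other matches its RHS counterpart for free once Jacobi is applied.
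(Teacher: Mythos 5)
Your proposal is correct and follows essentially the same route as the paper: both expand the two sides over the $\alpha/\beta$ splitting of each $R$, observe that every tensor signature matches automatically by Koszul-sign bookkeeping except the $\cdot\ot z\ot z$ terms, and reduce that last comparison via the super-Jacobi identity to the coefficient condition $\beta(u,w)\alpha(v,w)=\alpha(u,w)\beta(v,w)$. If anything you are more careful than the paper about the ``only if'' direction (the paper only exhibits the condition as sufficient), correctly noting that necessity requires homogeneous elements with $[a,[b,c]]\neq 0$ and $\alpha(u,v)\neq 0$.
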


\begin{proof} Following similar steps as in the previous proof, we
need that the next relations are true:
\begin{eqnarray}
\alpha(u,v)\alpha(v,w)\beta(u,w) [a,[b,c]]\ot z \ot z +  (-1)^{|b||c|} \alpha(u,v)\alpha(u,w)\beta(v,w) [[a,c],b]\ot z \ot z =  \nonumber\\  
\alpha(u,v)\alpha(u,w)\beta(v,w) [[a,b],c]\ot z \ot z \nonumber\\  
(-1)^{|a||[b,c]|} \alpha(v,w)\beta(u,w)\beta(u,v) a\ot [b,c] \ot z =(-1)^{|a||b|+|a||c|}\alpha(v,w)\beta(u,v)\beta(u,w) a\ot [b,c] \ot z \nonumber\\  
(-1)^{|b||c|+|[a,c]||b|} \alpha(u,w)\beta(u,v)\beta(v,w) [a,c]\ot b \ot z = (-1)^{|a||b|} \alpha(u,w)\beta(u,v)\beta(v,w) [a,c]\ot b \ot z\nonumber \\   
(-1)^{|b||c|+|a||c|} \alpha(u,v)\beta(u,w)\beta(v,w)   [a,b] \ot z \ot c = (-1)^{|[a,b]||c|} \alpha(u,v)\beta(u,w)\beta(v,w) [a,b]\ot z \ot c \nonumber \\   
\end{eqnarray}

It is easily observed that beside the first one all the relations are 
automatically fulfilled; as for the first relation, a sufficient condition is:
$$\frac{\alpha(v,w)}{\alpha(u,w)}=\frac{\beta(v,w)}{\beta(u,w)}.$$
For example, $ \alpha(u,v)= f(v)$ and $ \beta(u,v)= g(v)$ could be chosen.
\end{proof}

\begin{remark}
Letting $ u=v $ above, we obtain that:
$$ { \phi }^L_{ \alpha, \beta} \ : \ L \ot L \ \ \longrightarrow \ \  L \ot L $$

$$ 
x \ot y \mapsto \alpha [x,y] \ot z + (-1)^{ \vert x \vert \vert y \vert } \beta y \ot x \ . $$

and its inverse:

$$ {{ \phi }^L_{ \alpha, \beta}}^{-1} \ : \ L \ot L \ \ 
\longrightarrow \ \  L \ot L $$

$$x \otimes y \mapsto \frac{\alpha}{ {\beta}^2} z \otimes [x, y] + 
(-1)^{ \vert x \vert \vert y \vert 
} \frac{1}{ \beta } y \otimes x$$

are Yang-Baxter operators.
\end{remark}

\begin{remark}
Let us consider the above data and apply it to Remark 4.3.
Then, if we let $ s, t \in X$, we obtain the following
$WXZ$--system:

$W(a\otimes b) = R(s,s)(a\otimes b)=
 f(s)[a,b]\otimes z + g(s) (-1)^{|a||b|} a\otimes b, $ and

$Z(a\otimes b) = R(t,t)(a\otimes b)=   \ X(a\otimes b)= R(s,t)(a\otimes b)=
f(t)[a,b]\otimes z + g(t) (-1)^{|a||b|} a\otimes b$.

\end{remark}

\begin{remark}
The results presented in this section hold for Lie algebras as well.
This is a consequence of the fact that these operators restricted to the
first component of a Lie superalgebra have the same properties.

\end{remark}

\section{ $(\mathbb{G},\theta)$-Lie algebras}

We now consider the case of  $(\mathbb{G},\theta)$-Lie algebras as in \cite{Kanak}: a generalization of
Lie algebras and Lie superalgebras. 

A $(\mathbb{G},\theta)$-Lie algebras consists of a $\mathbb{G}$-graded 
vector space $L$, with $L=\oplus_{g\in\mathbb{G}}L_g$,  $\mathbb{G}$ a finite abelian group, a non associative 
multiplication $\langle ..,..\rangle : L \times L \to L$ respecting the graduation in the sense that
$\langle L_a,L_b\rangle \subseteq  L_{a+b}, \;\; \forall a,b\in \mathbb{G}$ and a function 
$\theta:\mathbb{G}\times\mathbb{G}\to C^{*} $ taking non-zero complex values. The following conditions
are imposed:
\begin{itemize}
  \item $\theta$-braided (G-graded) antisymmetry: $\langle x,y\rangle = -\theta(a,b)\langle y,x  \rangle$ 
  \item $\theta$-braided (G-graded) Jacobi id: $\theta(c,a)\langle x, \langle y,z\rangle\rangle + \theta(b,c)\langle z, \langle x,y\rangle\rangle +\theta(a,b)\langle y, \langle z,x\rangle\rangle =0 $
  \item $\theta : G \times G \to C^*$ color function 
$\left \{ \begin{array}{c}\theta(a+b,c) = 
\theta(a,c)\theta(b,c)\\ 
\theta(a,b+c) = \theta(a,b)\theta(a,c)\\   
\theta(a,b)\theta(b,a) = 1 \end{array}\right . $
\end{itemize}  
for all homogeneous $x\in L_a, y \in L_b, z\in L_c$ and $\forall a,b,c \in \mathbb{G}$.

\begin{theorem}
Under the above assumptions,

\begin{equation}\label{Lsol} 
R(x\otimes y) = \alpha[x,y]\otimes z + 
\theta(a,b) x\otimes y, 
\end{equation}
with $ z \in Z(L)$,
satisfies the equation ( \ref{ybeq2} )
$ \iff 
\theta(g,a)= \theta(a,g)=\theta(g,g)=1$, $\forall x\in L_a$ and $z\in L_g$.

The inverse operator reads: $R^{-1}(x\otimes y) = 
\alpha [y,x] \otimes z + \theta(b,a) x\otimes y $
\end{theorem}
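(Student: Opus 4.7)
The plan follows the structure of the Lie superalgebra proof of the previous section. I would apply both sides of (\ref{ybeq2}) to a homogeneous triple $a\otimes b\otimes c$ with $a\in L_p$, $b\in L_q$, $c\in L_r$, $z\in L_g$, and expand using the defining formula for $R$. Each application of $R^{ij}$ produces a ``bracket into $z$'' piece and a $\theta$-scalar identity piece, so each side of (\ref{ybeq2}) yields $2^3=8$ monomials; after using $z\in Z(L)$ to kill every term containing $[\,\cdot\,,z]$ or $[z,\,\cdot\,]$, the surviving monomials fall into five linearly independent types: $a\otimes b\otimes c$, $[a,b]\otimes z\otimes c$, $a\otimes[b,c]\otimes z$, $[a,c]\otimes b\otimes z$, and a ``double bracket''$\,\otimes z\otimes z$ term. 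The proof then becomes a monomial-by-monomial coefficient comparison.

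The identity monomial $a\otimes b\otimes c$ carries coefficient $\theta(p,q)\theta(p,r)\theta(q,r)$ on both sides and so matches automatically. Each of the three single-bracket monomials produces an identity which simplifies, using multiplicativity of $\theta$ and the symmetry $\theta(u,v)\theta(v,u)=1$, to the demand that a single $\theta$ factor involving $g$ equal $1$. For instance, the $[a,b]\otimes z\otimes c$ coefficient on the left collapses to $\alpha\theta(p+q,r)$ and on the right to $\alpha\theta(p+q,r)\theta(g,r)$, forcing $\theta(g,r)=1$; similarly $\theta(p,g)=1$ and $\theta(q,g)=1$ fall out of the remaining two slots. Together these are equivalent to the stated $\theta(g,a)=\theta(a,g)=1$ for all $a$, and they are necessary as well because distinct tensor monomials are linearly independent (assuming $L$ and the bracket are rich enough for those monomials to be nonzero).

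The delicate comparison is the $\otimes z\otimes z$ slot, where after enforcing the preceding conditions the equation reduces to
\begin{equation*}
[a,[b,c]] + \theta(q,r)\,[[a,c],b] = \theta(g,g)\,[[a,b],c].
\end{equation*}
The main work is to derive this from the $\theta$-braided Jacobi identity. Starting from
\begin{equation*}
\theta(r,p)\langle a,\langle b,c\rangle\rangle + \theta(q,r)\langle c,\langle a,b\rangle\rangle + \theta(p,q)\langle b,\langle c,a\rangle\rangle = 0,
\end{equation*}
I would apply $\theta$-antisymmetry twice on the last two summands to move the outer bracket to the left-hand argument, then repeatedly collapse the compound $\theta$ factors using $\theta(u,v)\theta(v,u)=1$ and $\theta(a+b,c)=\theta(a,c)\theta(b,c)$. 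The cleaned form is $[a,[b,c]]-[[a,b],c]+\theta(q,r)[[a,c],b]=0$, which matches the desired identity precisely when $\theta(g,g)=1$. Finally, to verify the inverse one computes $R(R^{-1}(x\otimes y))$: the first summand of $R^{-1}$ gives $\alpha\theta(a+b,g)[y,x]\otimes z = \alpha[y,x]\otimes z$ (by centrality of $z$ together with $\theta(\cdot,g)=1$), while the second yields $\alpha\theta(b,a)[x,y]\otimes z + x\otimes y$; the two $\otimes z$ contributions cancel by $\theta$-antisymmetry $[y,x]=-\theta(b,a)[x,y]$, leaving $x\otimes y$.
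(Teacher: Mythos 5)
Your proposal is correct and follows essentially the same route as the paper: apply both sides of the braid-form equation to a homogeneous triple, discard all terms containing a bracket with the central element $z$, and compare coefficients slot by slot, with the $\otimes z\otimes z$ slot handled by the $\theta$-braided Jacobi identity and the single-bracket slots forcing $\theta(\cdot,g)=\theta(g,\cdot)=\theta(g,g)=1$ via multiplicativity and $\theta(u,v)\theta(v,u)=1$. In fact you supply details the paper leaves implicit — the explicit reduction of the braided Jacobi identity to $[a,[b,c]]-[[a,b],c]+\theta(q,r)[[a,c],b]=0$, the linear-independence caveat needed for the ``only if'' direction, and the verification of the inverse — so the argument is, if anything, more complete than the printed one.
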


\begin{proof} If we consider the homogeneous elements $x\in L_a$, $y\in L_b$, $t\in L_c$, 
as before,
$$ R^{12} R^{13}R^{23} (x \ot y \ot t) = R^{23} R^{13} R^{12} (x \ot y \ot t)  $$
is equivalent to
\begin{eqnarray}
\theta(a,g)[x,[y,t]]\ot z \ot z +  \theta(b,c) [[x,t],y]\ot z \ot z = \theta(g,g)  [[x,y],c]\ot z \ot z   \\
\theta(a,g)\theta(a,b+c) x\ot [y,t] \ot z = \theta(a,b)\theta(a,c)x\ot\ [y,t] \ot z    \\
\theta(b,c) \theta(a+c,b) [x,t]\ot y \ot z = \theta(a,b)\theta(b,g)[x,t]\ot y \ot z  \\   
\theta(b,c) \theta(a,c) [x,y] \ot z \ot t  = \theta(a+b,c) \theta(g,c)[x,y]\ot z \ot t 
\end{eqnarray}

Due to the conditions $\langle L_a, L_b\rangle \subseteq L_{a+b} $ the above relations   are true 
if $\theta(a,g)=\theta(b,g)=\theta(g,c)=\theta(g,g)=1$ is assumed.
\end{proof}

\section{Conclusions}

Motivated by the need to create a better frame 
for the study of Lie (super)algebras
than that presented in \cite{tbh}, this paper
extends that construction and makes an  analysis
on the constructions of solutions for the two-parameter form
of the QYBE and Yang-Baxter systems.

Following a series of posters presented at  
National Conferences on Theoretical Physics,
our paper generalizes the constructions from \cite{mj}
(to $(G,\theta)$-Lie algebras). Somewhat
less sophisticated than that of \cite{mj}
(we
do not use Category Theory), our approach
is direct and more suitable for applications. 
\cite{nichita} considered the constructions of Yang-Baxter operators
from Lie (co)algebras, suggesting  an extension
(to a bigger category with a self-dual functor acting on it) for the
duality between
the category of finite dimensional  Lie algebras
and
the category of finite dimensional  Lie coalgebras. This duality extension
was
explained in \cite{dns}.

Finally, some  applications of these results could be in constructions of: FRT
bialgebras (from the  Yang-Baxter operators obtained
in this paper), knot invariants (see Remark 2.5), solutions for the
classical Yang-Baxter equation (see below), etc. 

\begin{theorem} Let  $ ( L , [,] )$ be a Lie algebra 
and
$ z \in Z(L)$. Then:

$ r: L \ot L \ \ \longrightarrow \ \  L \ot L, \ \  
x \ot y \mapsto [x,y] \ot z $ 

satisfies the classical Yang-Baxter equation:

$ [ r^{12},\  r^{13} ] \  + \  [r^{12}, \  r^{23}] \ + \  [r^{13}, \ r^{23}] = 0 $.
\end{theorem}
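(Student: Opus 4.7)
The plan is to evaluate each of the six compositions $r^{ij}\,r^{k\ell}$ on a generic simple tensor $x \ot y \ot t \in L^{\otimes 3}$ and exploit centrality of $z$ to kill most of them, so that the classical Yang-Baxter expression collapses to a single instance of the Jacobi identity.

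First I would compute the three operators on $x \ot y \ot t$, obtaining $r^{12}(x \ot y \ot t) = [x,y] \ot z \ot t$, $r^{23}(x \ot y \ot t) = x \ot [y,t] \ot z$, and (using the convention $r^{13} = (I \ot \tau)(r \ot I)(I \ot \tau)$ fixed in Section 2) $r^{13}(x \ot y \ot t) = [x,t] \ot y \ot z$. Then I would compose these pairwise. The crucial observation is that whenever $z$ ends up in a slot where a subsequent bracket is applied, centrality forces the whole term to vanish. Running through the six possibilities I expect $r^{23} r^{12} = 0$, $r^{13} r^{23} = 0$, and $r^{23} r^{13} = 0$, so that $[r^{13}, r^{23}] = 0$ and $[r^{12}, r^{23}](x \ot y \ot t) = [x,[y,t]] \ot z \ot z$, while $[r^{12}, r^{13}](x \ot y \ot t) = \bigl([[x,t],y] - [[x,y],t]\bigr) \ot z \ot z$.

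Summing these three contributions, the left-hand side of the classical Yang-Baxter equation becomes $\bigl([x,[y,t]] + [[x,t],y] - [[x,y],t]\bigr) \ot z \ot z$, and the Jacobi identity $[[x,y],t] + [[y,t],x] + [[t,x],y] = 0$ gives precisely the cancellation required. Extending by linearity finishes the proof.

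The main obstacle is nothing more than bookkeeping: one has to be careful about the convention for $r^{13}$ so that the $z$ lands in the correct tensor slot, and one has to track the order in which the brackets are taken when composing, since it is only after using antisymmetry to rewrite terms like $[[x,t],y]$ that the Jacobi relation becomes visible. Once the conventions are fixed, the computation is entirely mechanical.
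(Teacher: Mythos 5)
Your computation is correct: with the convention $r^{13}=(I\ot\tau)(r\ot I)(I\ot\tau)$ one indeed gets $r^{13}(x\ot y\ot t)=[x,t]\ot y\ot z$, centrality of $z$ kills the three compositions you list, and the surviving sum $\bigl([x,[y,t]]+[[x,t],y]-[[x,y],t]\bigr)\ot z\ot z$ vanishes by antisymmetry plus the Jacobi identity. Note that the paper states this theorem in its Conclusions without any proof at all, so there is no argument of the authors to compare against; your verification supplies the missing proof, and it is exactly in the style of the term-by-term expansions the paper does carry out for Theorems 5.3 and 6.1, so it is the intended argument.
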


\begin{center}

\end{center}

\begin{thebibliography}{Bibliography}{}

\bibitem{ba}C. Bautista, {\em Quantum Lie Algebras of Type $ A_n $ Positive,
PBW Bases and the Yang-Baxter equation}, preprint, QA/9807140.

\bibitem{bn} T. Brzezinski, F. F. Nichita, {\em Yang-Baxter systems and entwining structures}, Comm. Algebra 33: 1083-1093, 2005.

\bibitem{DasNic:yan}S. D\u asc\u alescu, F. F. Nichita, {\em
Yang-Baxter operators arising from (co)algebra structures.} Comm.\
Algebra {\bf 27} (1999), 5833--5845.

\bibitem{dns}S. D\u asc\u alescu, F. F. Nichita and S. D. Schack,
{\em On Some Duality Theorems}, lecture notes,
 American Mathematical Society meeting no. 943, 
SUNY Buffalo, USA, April 24 - 25, 1999.

\bibitem{defk} A. Doikou, S. Evangelisti, G. Feverati and N. Karaiskos,
{\em Introduction to Quantum Integrability}, arXiv:math-ph/0912.3350v2.

\bibitem{Fad} L. D. Faddeev,  Sov. Sc. Rev. C1 (1980) 107.


\bibitem{hi} J. Hietarinta, {\em All solutions to the constant quantum
 Yang-Baxter equation in two dimensions}, Phys. Lett. A 165 (1992), 245-251.

\bibitem{HlaKun:qua} L. Hlavaty, A. Kundu, {\em Quantum integrability of    
nonultralocal models through Baxterization of quantised braided algebra} Int.J.
Mod.Phys. A, 11(12):2143-2165, 1996.

\bibitem{HlaSno:sol} L. Hlavaty, L. Snobl, {\em Solution of the Yang-Baxter
system for quantum doubles}, Int.J.
Mod.Phys. A, 14(19):3029-3058, 1999.


\bibitem{Kul:Skl} P. Kulish, E. K. Sklyanin { \em Lect Notes in Phys. } vol
{\bf 151} p. 61, Springer, Berlin (1982).

\bibitem{LamRad:Int} L. Lambe, D. Radford, { \em Introduction to the
quantum Yang-Baxter equation and quantum groups: an algebraic approach.}
Mathematics and its Applications {\bf 423}. Kluwer Academic Publishers,
Dordrecht (1997).

\bibitem{mj} S. Majid, {\em Solutions of the Yang-Baxter equation from braided-Lie
algebras and braided groups}, J. Knot Theory and Its Ramifications 4(1995), 673-697.


\bibitem{mn} G. Massuyeau, F. F. Nichita, {\em Yang-Baxter operators arising from algebra structures and the Alexander polynomial of knots}, 
Communications in Algebra, vol. 33 (7), 2375-2385, 2005.

\bibitem{nichita} F. F. Nichita, {\em Non-linear equations, Quantum Groups and Duality theorems:
A primer on the Yang-Baxter equation}, VDM Verlag, 2009.

\bibitem{np} F. F. Nichita, D. Parashar, {\em Spectral-parameter dependent
Yang-Baxter operators and Yang-Baxter systems from algebra structures}, Communications in Algebra, 34: 2713-2726, 2006.

\bibitem{nipo} F. F. Nichita, B. P. Popovici, {\em  
  Some results on the Yang-Baxter equations and applications},  
Romanian Journal of Physics, Volume 53, Number 9-10, 2008, 1177-1182.

\bibitem{tbh} A. Tanasa, A. Balesteros and J. Herranz, {\em Solutions for
the constant quantum Yang-Baxter equation from Lie (super)algebras}, J.
Geom. and Symm. Phys. 4 (2005), 1-8.  


\bibitem{Kanak} K. Kanakoglou, C. Daskaloyannis, A. Herrera-Aguilar  {\em Super-Hopf realizations of Lie superalgebras: 
Braided Paraparticle extensions of the Jordan-Schwinger map} AIP Conf. Proc. v. 1256, p.193-200, 2010, arxiv:1008.0680v1  

\end{thebibliography}
\end{document}